\newcommand{\e}{\varepsilon}
\newcommand{\al}{\alpha}
\newcommand{\te}{\theta}
\newcommand{\fy}{\varphi}
\newcommand{\p}{\partial}
\newcommand{\I}{\infty}
\newcommand{\R}{\mathbb{R}}
\newcommand{\N}{\mathbb{N}}
\newcommand{\Z}{\mathbb{Z}}
\newcommand{\expo}{\operatorname{e}}
\numberwithin{equation}{section}
\newtheorem{thm}{Theorem}[section]
\newtheorem{lem}[thm]{Lemma}
\newtheorem{prop}[thm]{Proposition}
\theoremstyle{remark}
\newtheorem{rem}{Remark}
\newcommand{\lec}{\lesssim}
\newcommand{\gec}{\gtrsim}
\newcommand{\EQ}[1]{\begin{equation} \begin{split} #1 \end{split} \end{equation}}
\newcommand{\Del}[1]{}
\newcommand{\CAS}[1]{\begin{cases} #1 \end{cases}}
\newcommand{\pt}{&}
\newcommand{\pr}{\\ &}
\newcommand{\pq}{\quad}
\newcommand{\de}{\delta}
\newcommand{\ka}{\kappa}
\newcommand{\x}{\xi}
\newcommand{\y}{\eta}
\newcommand{\z}{\zeta}
\newcommand{\na}{\nabla}
\newcommand{\De}{\Delta}
\newcommand{\Om}{\Omega}
\newcommand{\Limsup}{\varlimsup}
\begin{document}
\newcommand{\frab}[2]{\left[\frac{#1}{#2}\right]}



\author{Slim Ibrahim}
\address{Department of Mathematics and Statistics \\ University of Victoria \\
 PO Box 3060 STN CSC \\ Victoria, BC, V8P 5C3\\ Canada}
\email{ibrahim@math.uvic.ca}
\thanks{S. I.  is partially supported by NSERC\#
 371637-2009 grant and a start up fund from University of 
Victoria.}


\author{Nader Masmoudi}
\address{New York University \\
The Courant Institute for Mathematical Sciences.}
\email{masmoudi@courant.nyu.edu}
\thanks{N. M is partially supported by an NSF Grant DMS-0703145}


\author{Kenji Nakanishi}
\address{Department of Mathematics, Kyoto University}
\email{n-kenji@math.kyoto-u.ac.jp}

\title[Trudinger-Moser with the exact growth]
{Trudinger-Moser inequality on the whole plane with the exact growth condition}


\keywords{Sobolev critical exponent, Trudinger-Moser inequality, 
concentration compactness, nonlinear Schr\"odinger equation, ground state.}

\subjclass[2010]{35J20,46E35,35Q55} 

\begin{abstract}
Trudinger-Moser inequality is a substitute to the (forbidden) critical Sobolev embedding, namely the case where the scaling corresponds to $L^\infty$. 
It is well known that the original form of the inequality with the sharp exponent (proved by Moser) fails on the whole plane, but a few modified versions are available. We prove a precised version of the latter, giving {\it necessary and sufficient conditions} for the boundedness, as well as for the compactness, in terms of the growth and decay of the nonlinear function. 
It is tightly related to the ground state of the nonlinear Schr\"odinger equation (or the nonlinear Klein-Gordon equation), for which the range of the time phase (or the mass constant) as well as the energy is given by the best constant of the inequality. 
\end{abstract}

\maketitle 


\section{Introduction}
 

There are several extentions of the critical Sobolev embedding
\EQ{ \label{Sobolev high D}
 \dot H^1(\R^d)\subset L^{2d/(d-2)}(\R^d)} 
from $d \geq 3$ to $d=2$, where the simple limit estimate fails
\EQ{
 H^1(\R^2)\not\subset L^\I(\R^2).} 
One way is to replace the right hand side by $BMO$, Besov, Triebel-Lizorkin or Morrey-Companato spaces of the same type (scaling). These are all taking account of possible oscillations of the functions in $H^1(\R^2)$. If one is interested more in possible growth, another substitute is given by the Trudinger-Moser inequality \cite{Trudinger67,Moser71,Po,Yodovich61} 
\begin{prop}\label{proptm3} For any open set $\Omega \subset \R^2$ with bounded measure $|\Om|<\I$, there exists a constant $\kappa(\Omega)>0$ such that 
\begin{equation} \label{Mos3} 
 u \in H^1_0(\Omega),\ \|\nabla u\|_{L^2(\Om)}\leq 1 \implies \int_{\R^2}\,\left(\expo^{4\pi |u|^2}-1\right)\,dx \le \ka(\Om). 
\end{equation} 
Moreover, this fails if $4\pi$ is replaced with any $\alpha>4\pi$. The constant $\kappa(\Omega)$ is bounded by $|\Omega|$, but in general unbounded as $|\Om|\to\I$. 
\end{prop}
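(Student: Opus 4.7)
The plan is to follow Moser's classical argument via symmetric decreasing rearrangement.

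\emph{Reduction to a radial function on a disk.} Given $u\in H^1_0(\Om)$ with $\|\na u\|_{L^2}\leq 1$, I would replace $u$ by its Schwarz symmetrization $u^*$, supported in the disk $\Om^*=B_R$ of area $\pi R^2=|\Om|$. The P\'olya--Szeg\H o inequality gives $\|\na u^*\|_{L^2}\leq\|\na u\|_{L^2}\leq 1$, and equimeasurability preserves $\int(\expo^{4\pi u^2}-1)\,dx$. It therefore suffices to prove the bound for a radial, non-increasing $u$ on $B_R$.

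\emph{Reduction to a 1D integral.} For such $u$, introduce $t=2\log(R/r)\in[0,\I)$ and set $w(t)=\sqrt{4\pi}\,u(Re^{-t/2})$. A direct change of variables yields $w(0)=0$, $\|w'\|_{L^2(\R_+)}^2=\int_{B_R}|\na u|^2\,dx\leq 1$, and
\[
 \int_{B_R}(\expo^{4\pi u^2}-1)\,dx=\pi R^2\int_0^\I(\expo^{w(t)^2}-1)\expo^{-t}\,dt.
\]
By Cauchy--Schwarz, $w(t)^2\leq t\int_0^t(w')^2\,ds\leq t$, so the integrand is bounded by $1-\expo^{-t}$, which is not integrable on $\R_+$.

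\emph{The one-dimensional Moser estimate --- the main obstacle.} Upgrading the pointwise bound $w(t)^2\leq t$ to one that makes $\expo^{w(t)^2-t}$ integrable is where the sharp exponent $4\pi$ gets pinned down; a uniform Cauchy--Schwarz argument is tight and does not suffice. Following Moser, I would analyze the distribution function of $w^2-t$, partitioning the $t$-axis into level sets $\{k\leq w(t)^2<k+1\}$ ($k\in\N$). On $\{w(t)^2\geq k\}$ the Dirichlet constraint forces $\int_0^t(w')^2\,ds\geq k/t$, and this combined with $\|w'\|_{L^2}^2\leq 1$ makes the measure of the overshoot set shrink geometrically in $k$. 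Summing the contributions to $\int_0^\I(\expo^{w^2}-1)\expo^{-t}\,dt$ produces a convergent geometric series, yielding the uniform bound $\ka(\Om)\lec|\Om|$.

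\emph{Sharpness and unboundedness in $|\Om|$.} For sharpness of the exponent I would test against the classical Moser family, supported in a disk $B_R\subset\Om$,
\[
 u_n(x)=\frac{1}{\sqrt{2\pi}}\CAS{(\log n)^{1/2}, & |x|\leq R/n\\ (\log n)^{-1/2}\log(R/|x|), & R/n\leq|x|\leq R\\ 0, & |x|\geq R}
\]
A direct computation gives $\|\na u_n\|_{L^2}=1$, while on $\{|x|\leq R/n\}$ one has $\expo^{\al u_n^2}=n^{\al/(2\pi)}$, so $\int(\expo^{\al u_n^2}-1)\,dx\gec n^{\al/(2\pi)-2}\to\I$ whenever $\al>4\pi$. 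Finally, placing many disjoint rescaled translates of a single admissible test function in an enlarged $\Om$ shows $\ka(\Om)\to\I$ as $|\Om|\to\I$.
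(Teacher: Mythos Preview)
The paper does not give its own proof of Proposition~\ref{proptm3}; it is quoted as the classical Trudinger--Moser inequality with references to \cite{Trudinger67,Moser71,Po,Yodovich61}, so there is nothing in the paper to compare your argument against. Your plan is exactly Moser's original route: symmetrize, pass to the one-dimensional problem via the logarithmic change of variables, and invoke Moser's lemma $\int_0^\I \expo^{w(t)^2-t}\,dt\le C$ under $w(0)=0$, $\|w'\|_{L^2}\le 1$.

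One caution about your sketch of the ``main obstacle'': the level-set description you give is not quite how Moser's lemma is proved, and the claim that $\int_0^t(w')^2\ge k/t$ on $\{w^2\ge k\}$ together with $\|w'\|_{L^2}\le 1$ forces geometric decay of those level sets does not follow as stated (it only gives $t\ge k$, which is the naive Cauchy--Schwarz bound again). The actual argument in \cite{Moser71} analyzes instead the superlevel sets of $w(t)^2-t$ and uses a more delicate iteration; you should either reproduce that or simply cite Moser's lemma. Your sharpness computation with the Moser sequence is correct, and for $\ka(\Om)\to\I$ a single spatially rescaled copy $v(x/\la)$ already suffices in $\R^2$ since $\|\na v(\cdot/\la)\|_{L^2}=\|\na v\|_{L^2}$ while the functional scales like $\la^2$.
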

The goal of this paper is to give a precised version of this inequality in the whole space $\R^2$, with necessary and sufficient conditions in terms of the growth of general nonlinear functionals (not only for $e^{\al|u|^2}-1$). 
Before stating our result, let us first recall the following two versions of the Trudinger-Moser inequality on $\R^2$. The first one is for smaller exponents. 
\begin{prop} \label{mostrud} For any $\alpha<4\pi$, there exists a constant $c_\alpha>0$ exists such that
\begin{equation}\label{Mos1} 
 u\in H^1(\R^2),\ \|\nabla u\|_{L^2(\R^2)}\leq1 \implies \int_{\R^2}\,\left(\expo^{\alpha |u|^2}-1\right)\,dx\leq c_\alpha \|u\|_{L^2(\R^2)}^2. 
\end{equation}
Moreover, this fails if $\alpha$ is replaced with $4\pi$.
\end{prop}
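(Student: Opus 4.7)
The plan is to deduce \eqref{Mos1} from the bounded-domain Moser inequality (Proposition~\ref{proptm3}) by splitting $\R^2$ at the level $\{|u|=1\}$ and exploiting the Chebyshev bound $|\{|u|>1\}|\le\|u\|_{L^2}^2$. Replacing $u$ by $|u|$ (which does not increase the Dirichlet energy since $|\nabla|u||\le|\nabla u|$ a.e.) and passing to a smooth approximant, we may take $u\ge 0$. On $A:=\{u\le 1\}$, the pointwise expansion
\[
e^{\alpha u^2}-1=\sum_{k\ge 1}\frac{(\alpha u^2)^k}{k!}\le u^2\sum_{k\ge 1}\frac{\alpha^k}{k!}=(e^\alpha-1)\,u^2
\]
(using $u^{2k}\le u^2$ for $k\ge 1$ when $u\le 1$) immediately gives $\int_A(e^{\alpha u^2}-1)\,dx\le(e^\alpha-1)\|u\|_{L^2}^2$.

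On the open super-level set $\Omega:=\{u>1\}$, with $|\Omega|\le\|u\|_{L^2}^2$, I would introduce $v:=(u-1)_+\in H^1_0(\Omega)$, which satisfies $\|\nabla v\|_{L^2(\Omega)}\le\|\nabla u\|_{L^2}\le 1$. Choose $\delta>0$ with $\alpha(1+\delta)\le 4\pi$, possible precisely because $\alpha<4\pi$. Young's inequality $2v\le\delta v^2+\delta^{-1}$ on $\Omega$ gives
\[
u^2=(v+1)^2\le(1+\delta)v^2+(1+\delta^{-1}),\qquad\text{hence}\qquad e^{\alpha u^2}\le e^{\alpha(1+\delta^{-1})}\,e^{4\pi v^2}.
\]
Proposition~\ref{proptm3} applied to $v$ on $\Omega$ yields $\int_\Omega(e^{4\pi v^2}-1)\,dx\le\kappa(\Omega)$, and the linear-in-area bound $\kappa(\Omega)\lesssim|\Omega|$ follows by Schwarz symmetrization to a ball combined with the two-dimensional invariance $\|\nabla(v(\lambda\,\cdot))\|_{L^2}=\|\nabla v\|_{L^2}$ and area scaling. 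Thus $\int_\Omega e^{4\pi v^2}\,dx\lesssim|\Omega|$, and
\[
\int_\Omega(e^{\alpha u^2}-1)\,dx\lesssim_\alpha|\Omega|\le\|u\|_{L^2}^2.
\]
Adding the two contributions yields \eqref{Mos1}.

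For the sharpness at $\alpha=4\pi$, I would test with Moser's logarithmic sequence
\[
M_n(x):=\frac{1}{\sqrt{2\pi}}\min\!\left(\sqrt{\log n},\ \frac{\log(1/|x|)}{\sqrt{\log n}}\right)_+,
\]
supported in the unit disk. Direct integration in polar coordinates gives $\|\nabla M_n\|_{L^2}=1$ and $\|M_n\|_{L^2}^2=O(1/\log n)\to 0$, while on the inner disk $|x|\le 1/n$ one has $4\pi M_n^2=2\log n$, so
\[
\int_{\R^2}(e^{4\pi M_n^2}-1)\,dx\ge(n^2-1)\cdot\frac{\pi}{n^2}\ \longrightarrow\ \pi>0.
\]
This rules out any finite $c_{4\pi}$. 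The one delicate step in the positive direction is the exponent absorption $(v+1)^2\le(1+\delta)v^2+C_\delta$: this is where the strict inequality $\alpha<4\pi$ is unavoidable, and it also explains why the endpoint $\alpha=4\pi$ requires a genuinely different right-hand side, as developed later in the paper.
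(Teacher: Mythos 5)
Your argument is correct. Note that the paper itself gives no proof of Proposition~\ref{mostrud}: the boundedness is attributed to Cao~\cite{Cao}, who argues directly via symmetric decreasing rearrangement following Moser, and the failure at $\alpha=4\pi$ to Adachi--Tanaka~\cite{AT}. Your positive direction is thus a genuinely different route: a reduction to the bounded-domain Trudinger--Moser inequality (Proposition~\ref{proptm3}) via the Chebyshev bound $|\{|u|>1\}|\le\|u\|_{L^2}^2$ and the Young absorption $(v+1)^2\le(1+\delta)v^2+(1+\delta^{-1})$, which transparently isolates where $\alpha<4\pi$ is used. What this buys is modularity and elementariness, once Proposition~\ref{proptm3} and its linear-in-area constant are taken as given (note $\kappa(\Omega)\lesssim|\Omega|$ is already asserted in the statement of Proposition~\ref{proptm3}, so you need not re-derive it by symmetrization); what Cao's direct approach buys is independence from the harder bounded-domain endpoint. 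One technical point worth a sentence: the membership $(u-1)_+\in H^1_0(\{u>1\})$ requires the super-level set to be sufficiently regular, which a smooth approximant alone does not quite settle; the cleanest fix is to Schwarz-symmetrize $v=(u-1)_+$ on all of $\R^2$, so that $v^*\in H^1_0(B)$ for a ball $B$ of measure $|\{v>0\}|\le\|u\|_{L^2}^2$, apply Proposition~\ref{proptm3} on $B$, and conclude by equimeasurability of $e^{4\pi v^2}-1$. Your sharpness argument is exactly the Moser sequence $f_{\log n}$ appearing in Section~\ref{conc} and is correct.
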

One can normalize to $\|u\|_{L^2}=1$ by scaling. This version was proved in \cite{Cao}, using the symmetric decreasing rearrangement as Moser did \cite{Moser71}. The necessity $\al<4\pi$ was proved in \cite{AT}, also using Moser's example. 

The second one is to strengthen the condition on $\|\na u\|_{L^2(\R^2)}$ to the whole $H^1(\R^2)$ norm. Then the value $\alpha=4\pi$ becomes admissibe. 
\begin{prop} \label{proptm}
There exists a constant $\ka>0$ such that 
\begin{equation}\label{Mos2} 
 u\in H^1(\R^2),\ \|u\|_{H^1 (\R^2)}\leq 1 \implies \int_{\R^2}\,\left(\expo^{4\pi |u|^2}-1\right)\,dx\le \kappa.
\end{equation}
Moreover, this fails if $4\pi$ is replaced with any $\alpha>4\pi$.
\end{prop}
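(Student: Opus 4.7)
My plan is to reduce to the radial setting by Schwarz symmetrization and then split $\R^2$ into a small inner disk, where $u$ can be large and is handled by Proposition~\ref{proptm3} via Moser's truncation trick, and its exterior, where the radial $L^2$ control forces $|u|$ to be small. Given $u\in H^1(\R^2)$ with $\|u\|_{H^1}\le 1$, the Schwarz rearrangement $u^*$ preserves $\|u\|_{L^2}$, does not increase $\|\na u\|_{L^2}$ by P\'olya--Szeg\H{o}, and leaves $\int(\expo^{4\pi|u|^2}-1)\,dx$ invariant, so we may assume $u=u^*$ is radial and nonincreasing. Radial monotonicity then gives the pointwise bound $\pi r^2 u(r)^2\le\|u\|_{L^2(B_r)}^2\le 1$, i.e.\ $u(r)\le(\pi r^2)^{-1/2}$ for every $r>0$.

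On the exterior $\{|x|\ge 1\}$ this yields $u\le\pi^{-1/2}$, hence $4\pi u^2\le 4$, and the elementary bound $\expo^s-1\lec s$ on $[0,4]$ produces
\EQ{
\int_{|x|\ge 1}(\expo^{4\pi|u|^2}-1)\,dx \lec \|u\|_{L^2}^2 \le 1.
}
On $B_1$ I set $T:=u(1)$ and $v:=(u-T)_+$, so that $v\in H^1_0(B_1)$ and, using $u\ge T$ on $B_1$,
\EQ{
\|\na v\|_{L^2(B_1)}^2 \le \|\na u\|_{L^2(\R^2)}^2 \le 1-\|u\|_{L^2(\R^2)}^2 \le 1-\pi T^2.
}
Writing $u=v+T$ on $B_1$ and applying $(v+T)^2\le(1+\e)v^2+(1+1/\e)T^2$ with $\e=\pi T^2/(1-\pi T^2)$ (so that $T^2(1+1/\e)=1/\pi$) produces $4\pi u^2 \le 4\pi v^2/(1-\pi T^2)+4$ on $B_1$. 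The rescaling $w:=v/\sqrt{1-\pi T^2}$ sits in $H^1_0(B_1)$ with $\|\na w\|_{L^2}\le 1$, so Proposition~\ref{proptm3} bounds $\int_{B_1}(\expo^{4\pi w^2}-1)\,dx$ by an absolute constant, and multiplication by $\expo^4$ then yields the desired bound on $\int_{B_1}(\expo^{4\pi|u|^2}-1)\,dx$.

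For the sharpness claim, I would reuse Moser's concentrating sequence $m_n$ supported in $B_1$ with $\|\na m_n\|_{L^2}=1$ and $\|m_n\|_{L^2}=O((\log n)^{-1/2})$; its normalization $u_n:=m_n/\|m_n\|_{H^1}$ satisfies $\|u_n\|_{H^1}=1$, and on the plateau $\{|x|\le 1/n\}$ one has $\alpha|u_n|^2=(\alpha/(2\pi))(1-o(1))\log n$, so $\int\expo^{\alpha|u_n|^2}\,dx\gec n^{\alpha/(2\pi)-2+o(1)}\to\I$ as soon as $\alpha>4\pi$. The delicate point on the positive side is the choice $T=u(1)$: the $L^2$ deficit $\pi T^2$ coming from inside $B_1$ is exactly what absorbs the factor $1/(1-\pi T^2)$ produced by rescaling $v$ to gradient norm one in Proposition~\ref{proptm3}, while the companion constant $T^2(1+1/\e)=1/\pi$ stays universal in $T$. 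This trade-off is precisely what the full $H^1$ norm makes available; with only $\|\na u\|_{L^2}\le 1$ one is forced back into the strictly subcritical regime $\alpha<4\pi$ of Proposition~\ref{mostrud}.
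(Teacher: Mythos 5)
Your argument is correct, but it takes a different path from the one the paper uses for this proposition. The paper does not reprove \eqref{Mos2} from scratch: it cites \cite{Ruf} for the original proof and then, in Section~\ref{Mos4 to Mos2}, gives a \emph{new} derivation of \eqref{Mos2} from the exact-growth inequality \eqref{Mos4}. There the idea is purely interpolative: after splitting the $H^1$ norm as $\|u\|_{L^2}^2\le\te$, $\|\na u\|_{L^2}^2\le 1-\te$, one applies H\"older to the integrand $e^{4\pi|u|^2}$ on the large set $\{|u|\ge 1\}$, writing it as a product of $e^{4\pi|u|^2/(1-\te)}/|u|^2$ raised to the power $1-\te$ (controlled uniformly by \eqref{Mos4} after the rescaling $u\mapsto u/\sqrt{1-\te}$) and a power of $|u|^2$ (controlled via the factorial growth of $\||u|^{2n}\|_{L^1}$ coming from \eqref{Mos1}). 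That route highlights that \eqref{Mos4} genuinely subsumes \eqref{Mos2}, which is the point of that section.

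What you do instead is essentially Ruf's original argument (which the paper attributes to \cite{Ruf}): P\'olya--Szeg\H{o} symmetrization, the radial decay bound $u(r)\le(\pi r^2)^{-1/2}$, and then the truncation $v=(u-T)_+$ at level $T=u(1)$, with the precise elementary inequality $(v+T)^2\le v^2/(1-\pi T^2)+1/\pi$ so that the gradient deficit $1-\pi T^2$ is exactly consumed in renormalizing $v$ to unit gradient for Proposition~\ref{proptm3}. The mechanism is the same trade-off as in the paper's H\"older argument (the $L^2$ mass eats the subcritical slack), but implemented pointwise rather than by duality, and it uses only Proposition~\ref{proptm3} rather than the finer Proposition~\ref{TM4}. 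Your proof is self-contained and arguably more transparent; the paper's is less elementary but demonstrates the logical hierarchy among the three inequalities, which is the structural point the authors want to make. Both treatments of the sharpness are fine; the paper simply observes it follows from the sharpness in Propositions~\ref{proptm3} and~\ref{mostrud}, while you make Moser's sequence explicit, which amounts to the same thing.
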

This version was proved in \cite{Ruf}, again by Moser's argument, while the failure for $\al>4\pi$ is clear from the sharpness in the previous two propositions.

In short, the failure of the original Trudinger-Moser \eqref{Mos3} on $\R^2$ can be recovered either by weakening the exponent $\al=4\pi$ or by strengthening the norm $\|\na u\|_{L^2}$. It is worth noting, however, that proving these two estimates on $\R^2$ is considerably easier than the critical case $4\pi$ on $\Om$, which suggests that there is some room of improvement, even though they are ``sharp" in their formulations. 

Then a natural question arises, 
\EQ{
 \text{\it What if we keep both the conditions $\alpha = 4\pi$ {\it and} $\|\na u\|_{L^2(\R^2)}\leq 1$?} }  
Our answer to this question is to weaken the exponential nonlinearity as follows: 
\begin{prop} \label{TM4}
There exists a constant $c>0$ such that 
\begin{equation}\label{Mos4} 
 u\in H^1(\R^2),\ \|\na u\|_{L^2(\R^2)}\le 1 \implies 
 \int_{\R^2}\,\frac{\expo^{4 \pi|u|^2}-1}{(1+|u|)^2}\,dx\leq c \|u\|_{L^2(\R^2)}^2. 
\end{equation}
Moreover, this fails if the power $2$ in the denominator is replaced with any $p<2$. 
\end{prop}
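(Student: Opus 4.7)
The plan is to reduce the inequality to a one-dimensional statement by Schwarz symmetrization and a logarithmic change of variables, and then establish it through a delicate Moser-type analysis that exploits both the $-1$ and the $(1+|u|)^2$ denominator.

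First, I would apply Schwarz rearrangement. Since the integrand depends only on $|u|$ and $|u|$ is equidistributed with its radial non-increasing rearrangement $u^*$, the LHS is unchanged; P\'olya--Szeg\H{o} gives $\|\nabla u^*\|_{L^2}\leq\|\nabla u\|_{L^2}$ while preserving $\|u^*\|_{L^2}=\|u\|_{L^2}$, so one may assume $u$ radial and non-increasing. The substitution $r=e^{-t/2}$ and the rescaling $v(t):=\sqrt{4\pi}\,u(e^{-t/2})$ then convert the inequality into the one-dimensional problem: for every non-decreasing $v:\R\to[0,\infty)$ with $v(-\infty)=0$ and $\int_\R\dot v^2\,dt\leq 1$,
\[
\int_\R \frac{e^{v^2}-1}{(1+v/\sqrt{4\pi})^2}\,e^{-t}\,dt \leq c\int_\R v^2\,e^{-t}\,dt.
\]

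Next, I would split the $t$-integral at $T:=\inf\{t:v(t)\geq 1\}$. On $(-\infty,T]$ the Taylor bound $(e^{v^2}-1)/(1+v/\sqrt{4\pi})^2\leq(e-1)v^2$ (using $v\leq 1$, the $-1$ playing the essential role) gives a contribution $\leq(e-1)\int v^2 e^{-t}\,dt$ immediately. On $[T,\infty)$ one has $v\geq 1$, so $(1+v/\sqrt{4\pi})^{-2}\lesssim v^{-2}$, and the remaining task reduces to the core estimate
\[
\int_T^\infty \frac{e^{v^2-t}}{v^2}\,dt \leq c\int_\R v^2 e^{-t}\,dt.
\]
I would handle this via the Moser upper bound $v(t)\leq 1+\sqrt{E_+(t-T)}$ (with $E_+:=\int_T^\infty\dot v^2\,dt$) from Cauchy--Schwarz, completing the square, and changing variables $y=\sqrt{t-T}$: the $1/v^2$ factor turns the resulting Gaussian integral into something integrable.

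The main obstacle is the delicate coupling between the concentration tail on $[T,\infty)$ and the rise of $v$ from $0$ to $1$ on $(-\infty,T]$, mediated by the total energy budget $E_-+E_+\leq 1$ where $E_-:=1-E_+$. As $E_+\to 1$, the LHS blows up via the Gaussian factor $e^{1/E_-}$; at the same time, Cauchy--Schwarz forces the rise of $v$ to occupy a $t$-interval of length $\geq 1/E_-$, and a direct computation on the linear-rise profile shows $\int_{-\infty}^T v^2 e^{-t}\,dt\gtrsim E_-^2\,e^{1/E_--T}$. The power $2$ in the denominator $(1+v/\sqrt{4\pi})^2$ is precisely what makes these two asymptotics match up to a universal constant; any weaker power would break this balance.

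Finally, for the sharpness (the power $2$ cannot be replaced by $p<2$), I would test the inequality on the Moser concentrating family
\[
u_n(r)=\frac{1}{\sqrt{2\pi}}\begin{cases}\sqrt{\log n},&r\leq 1/n,\\ \log(1/r)/\sqrt{\log n},&1/n<r<1,\\ 0,&r\geq 1,\end{cases}
\]
for which $\|\nabla u_n\|_{L^2}^2=1$ and $\|u_n\|_{L^2}^2\sim(\log n)^{-1}$, while the central disk $\{r\leq 1/n\}$ alone contributes $\sim(\log n)^{-p/2}$ to $\int(e^{4\pi u_n^2}-1)/(1+u_n)^p\,dx$. For $p<2$ the ratio LHS/RHS is then $\sim(\log n)^{1-p/2}\to\infty$, contradicting any uniform bound.
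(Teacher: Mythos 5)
Your overall reduction—Schwarz symmetrization, the logarithmic substitution $r=e^{-t/2}$, $v=\sqrt{4\pi}\,u$, and the split at the level set $\{v=1\}$—is sound and close in spirit to the paper's discretization at $R_j=R_0e^{-j}$. The sharpness argument via Moser's family is also correct and is essentially what the paper does in Section~2. However, there is a genuine quantitative gap in the core estimate on $[T,\infty)$, and it is exactly the point where the paper has to work hardest.

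Plugging the pointwise Moser envelope $v(t)\le 1+\sqrt{E_+(t-T)}$ into $\int_T^\infty e^{v^2-t}v^{-2}\,dt$, completing the square and setting $y=\sqrt{t-T}$, gives a Gaussian of variance $\sim E_-^{-1}$ centered at $y_0=\sqrt{E_+}/E_-$, where $v\sim 1/E_-$; the factor $2y/v^2\sim \sqrt{E_+}E_-$ at the center times the width $\sqrt{\pi/E_-}$ yields
\[
\int_T^\infty \frac{e^{v^2-t}}{v^2}\,dt \;\lesssim\; \sqrt{E_-}\;e^{1/E_--T}.
\]
On the other hand the lower envelope $v(t)\ge 1-\sqrt{E_-(T-t)}$ (also from Cauchy--Schwarz) gives, for \emph{every} admissible profile (not only the linear one),
\[
\int_{-\infty}^T v^2e^{-t}\,dt \;\gtrsim\; E_-^2\,e^{1/E_--T},
\]
and this is \emph{saturated} by the linear rise. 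The two sides therefore differ by a factor $E_-^{-3/2}\to\infty$ as $E_-\to0$, so the chain does not close. The reason is that the envelope $1+\sqrt{E_+(t-T)}$ has infinite Dirichlet energy; any admissible $v$ can touch it at a single scale but must then fall below it (indeed, for the profile that maximizes the left-hand side one gets exactly $\sim E_-^2e^{1/E_--T}$, matching the right-hand side). Turning the pointwise envelope into an integral bound over the whole peak region over-counts by this polynomial factor, and the statement that ``the two asymptotics match up to a universal constant'' is not established by the outlined computation.

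This is precisely where the paper departs from the naive Moser argument. After discretizing logarithmically, they track the cumulative energy $K_j$ and the boundary values $h_j$, establish $\xi_j:=H_j^2/K_j\le\xi_{j-1}+1$, and then split indices into a set $A$ where the increment is small (giving the sharper $\xi_j\le\xi_{j-1}+8/9$ and hence geometric decay) and a set $B$ where enough energy is burned for $K_j$ to decrease fast; the whole sum is then pinned to the first term by the separate ``optimal descending'' Theorem~3.1, which is exactly the sharp replacement for the envelope bound and carries the extra factor you are missing. To repair your proof you would need an argument at this level of precision on $[T,\infty)$; the pointwise envelope alone will not suffice.
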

Obviously this implies Proposition \ref{mostrud}. It is less obvious that it also implies Proposition \ref{proptm}, but indeed by H\"older only; see Section \ref{Mos4 to Mos2}. Hence Proposition \ref{TM4} can be regarded as a unified improvement of those two previous versions, while it can be easily deduced from the part (B) of our full Theorem \ref{main} below, by taking 
\EQ{
 2\pi K = 1, \pq g(u) =\frac{ (\expo^{4 \pi|u|^2}-1)}{(1+|u|)^2}.}  
The following theorem completely determines the growth order, not only among exponentials and power functionals, but for general functions, in terms of necessary and sufficient conditions, both for the boundedness and for the compactness. 

\begin{thm} \label{main}
For any Borel function $g:\R\to[0,\I)$, define the functional $G$ by 
\EQ{
 G(\fy)=\int_{\R^2}g(\fy(x))dx.} 
Then for any $K>0$ we have the following {\bf (B)} and {\bf (C)}. 

\noindent {\bf (B)} Boundedness: The following (1) and (2) are equivalent. 
\begin{enumerate}
\item $\Limsup_{|u|\to\I} \expo^{-2|u|^2/K}|u|^2g(u)<\I$ and $\Limsup_{u\to 0}|u|^{-2}g(u)<\I$.
\item There exists a constant $C_{g,K}>0$ such that 
\EQ{ \label{TMp}
 \fy\in H^1(\R^2),\ \|\na\fy\|_{L^2(\R^2)}^2\le 2\pi K \implies G(\fy)\le C_{g,K}\|\fy\|_{L^2}^2.}
\end{enumerate}
Moreover, if (1) fails then there exists a sequene $\fy_n\in H^1(\R^2)$ satisfying 
\EQ{ 
 \|\na\fy_n\|_{L^2}^2<2\pi K, \pq \|\fy_n\|_{L^2}\to 0, \pq G(\fy_n)\to\I \pq(n\to\I).}

\noindent{\bf (C)} Compactness: The following (3) and (4) are equivalent. 
\begin{enumerate} \setcounter{enumi}{2}
\item $\lim_{|u|\to\I} \expo^{-2|u|^2/K}|u|^2g(u)=0$ and $\lim_{u\to 0}|u|^{-2}g(u)=0$. 
\item For any sequence of radial $\fy_n\in H^1(\R^2)$ satisfying $\|\na\fy_n\|_{L^2}^2\le 2\pi K$ and weakly converging to some $\fy\in H^1(\R^2)$, we have $G(\fy_n)\to G(\fy)$. 
\end{enumerate}
Moreover, if (3) fails then there exists a sequence of radial $\fy_n\in H^1(\R^2)$ satisfying $\|\na\fy_n\|_{L^2}^2<2\pi K$, converging to $0$ weakly in $H^1$, and $G(\fy_n)>\de$ for some $\de>0$. 
\end{thm}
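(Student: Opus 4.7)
The plan is to pair a sharp refined Moser-Trudinger estimate (for the sufficient directions) with explicit Moser-type concentrations and dilated/traveling plateau constructions (for the contrapositive directions).

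For \textbf{(B), (1)$\Rightarrow$(2)}, I would first rescale $v=\fy/\sqrt{2\pi K}$ so that $\|\na v\|_{L^2}^2\le 1$ and $2|\fy|^2/K=4\pi v^2$; the hypothesis on $\ti g(v):=g(\sqrt{2\pi K}v)$ then reads $\ti g(v)\lec v^2$ near $0$ and $\ti g(v)\lec e^{4\pi v^2}/v^2$ at infinity, and the desired inequality becomes $\int\ti g(v)\,dx\lec\|v\|_{L^2}^2$. Split at a large level $M$: on $\{|v|\le M\}$ the near-zero bound gives the right-hand side directly. The heart of the matter is
\[
\int_{|v|\ge M}\frac{e^{4\pi v^2}}{v^2}\,dx\lec \|v\|_{L^2}^2\qquad (\|\na v\|_{L^2}\le 1),
\]
which is the essence of Proposition \ref{TM4} restricted to large values. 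I plan to establish it by Schwarz-symmetrizing to a radial decreasing $v^*$, switching to the logarithmic coordinate $t=-\log(\pi r^2)$ under which the Dirichlet energy becomes $4\pi\int|V'(t)|^2\,dt\le 1$ (with $V(t)=v^*(r)$) and the area element is $e^{-t}\,dt$. The classical one-dimensional Moser bound $4\pi V(t)^2\le t+O(1)$ on $\{V\ge M\}$ cancels the area factor $e^{-t}$ against $e^{4\pi V^2}$, and what is left is tied to $\|v\|_{L^2}^2=\int V^2 e^{-t}\,dt$ by integration by parts. This step is the main obstacle, and it is precisely where the sharp weight exponent $p=2$ of Proposition \ref{TM4} is forced.

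For the contrapositive \textbf{(B), (2)$\Rightarrow$(1)}: if $g(u_n)/u_n^2\to\I$ along $u_n\to 0$, use a dilated radial cap $\fy_n(x)=u_n\chi(x/R_n)$ with a fixed bump $\chi\equiv 1$ on $B(0,1)$; then $\|\na\fy_n\|_{L^2}^2\sim u_n^2\ll 2\pi K$, $\|\fy_n\|_{L^2}^2\sim u_n^2 R_n^2$, and $G(\fy_n)\gec R_n^2 g(u_n)$, so any $R_n$ with $u_n R_n\to 0$ and $R_n^2 g(u_n)\to\I$ works (possible exactly because $g(u_n)/u_n^2\to\I$). If instead $b_n:=u_n^2 e^{-2u_n^2/K}g(u_n)\to\I$ along $u_n\to\I$, use a Moser concentration: $\fy_n=u_n$ on $B(0,r_n)$, log-interpolating to $0$ on $\{r_n<|x|<R_n\}$, with $\log(R_n/r_n)=u_n^2/K+1$ enforcing $\|\na\fy_n\|_{L^2}^2=2\pi u_n^2/(u_n^2/K+1)<2\pi K$. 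Choosing $R_n^2=u_n^2/\sqrt{b_n}$ gives $r_n^2 g(u_n)=\sqrt{b_n}/e^2\to\I$ (so $G(\fy_n)\to\I$) while the plateau contribution to $\|\fy_n\|_{L^2}^2$ decays exponentially in $u_n^2$ and the annular contribution is $O(1/\sqrt{b_n})$.

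For \textbf{(C), (3)$\Rightarrow$(4)}: given $\e>0$, choose $0<\de<M$ with $g(u)\le\e u^2$ on $|u|\le\de$ and $g(u)\le\e e^{2u^2/K}/u^2$ on $|u|\ge M$, and split $g=g_\e+h_\e$ where $h_\e$ is continuous and supported in $\de\le|u|\le M$. The already-established part (B), applied to $g_\e/\e$, gives $\int g_\e(\fy_n)\,dx+\int g_\e(\fy)\,dx\lec\e$ uniformly in $n$. On the other hand, Strauss's compact embedding $H^1_{\mathrm{rad}}(\R^2)\hookrightarrow\hookrightarrow L^p(\R^2)$ for every $p\in(2,\I)$ gives $\fy_n\to\fy$ strongly in $L^3$; combined with $|h_\e(u)|\lec_\e|u|^3$ this yields $h_\e(\fy_n)\to h_\e(\fy)$ in $L^1$ by Vitali's theorem. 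Sending $n\to\I$ and then $\e\to 0$ closes this direction. For the contrapositive \textbf{(C), (4)$\Rightarrow$(3)}, only the weak limit $\fy=0$ is needed. If the large-$|u|$ condition fails with $b_n\ge c>0$ along $u_n\to\I$, take the Moser concentration above with $R_n=u_n$: then $\|\na\fy_n\|_{L^2}^2<2\pi K$, $\|\fy_n\|_{H^1}$ is bounded, and $\fy_n(x)\to 0$ pointwise for every $x\ne 0$, so $\fy_n\rightharpoonup 0$ in $H^1$, while $G(\fy_n)\gec g(u_n)r_n^2\gec b_n/e^2\gec 1$. If the near-zero condition fails with $g(u_n)/u_n^2\ge c>0$ along $u_n\to 0$, take the dilated cap with $R_n\sim 1/u_n$: $\fy_n$ is then $H^1$-bounded with $\|\na\fy_n\|_{L^2}^2\ll 2\pi K$, $\fy_n\to 0$ pointwise (since $u_n\to 0$), hence $\fy_n\rightharpoonup 0$ in $H^1$, and $G(\fy_n)\gec R_n^2 g(u_n)\sim g(u_n)/u_n^2\gec 1$.
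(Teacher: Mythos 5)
Your contrapositive constructions (the rescaled cap for the $|u|\to0$ regime and the Moser concentration for $|u|\to\I$, in both (B) and (C)) are essentially the same as the paper's Section~2, differing only in how the dilation parameter is named ($R_n$ or $S_k$); the bookkeeping you give checks out. Your sketch of (C)(3)$\Rightarrow$(4) is also in the same spirit as the paper's (splitting off the tails and treating the middle via radial compactness vs.\ the paper's truncation $g^L$ and dominated convergence); both rely on the same ingredients and are fine.

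The genuine gap is in (B)(1)$\Rightarrow$(2), which is the hard part of the theorem. After Schwarz symmetrization and passing to the logarithmic variable $t$, you invoke ``the classical one-dimensional Moser bound $4\pi V(t)^2\le t+O(1)$ on $\{V\ge M\}$.'' That bound is false for a general $v$ with $\|\na v\|_{L^2}\le 1$. What the gradient constraint gives you via Cauchy--Schwarz is $V(t)\le V(t_M)+\sqrt{(t-t_M)\theta/(4\pi)}$ with $\theta=4\pi\int_{t_M}^\I|V'|^2\,dt\le 1$, hence $4\pi V(t)^2\le 4\pi V(t_M)^2+(t-t_M)\theta+2\sqrt{4\pi\theta\,(t-t_M)}\,V(t_M)$. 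The cross term $\sqrt{t-t_M}\,V(t_M)$ is not $O(1)$; when $\theta$ is close to $1$ it produces an arbitrarily large correction, so $e^{4\pi V^2-t}$ need not be bounded by a constant, and the subsequent ``cancellation against $e^{-t}$ followed by integration by parts against $\|v\|_{L^2}^2$'' does not go through. In fact if this pointwise bound were available, the refined inequality of Proposition~\ref{TM4} would be almost immediate, whereas it is the central difficulty of the paper. The paper handles it with two nontrivial ingredients: Theorem~\ref{rad TM} (the optimal exterior ``descending'' estimate, proved via the discrete $\mu_d$ problem) to control the starting value $h_0=\fy(R_0)$ in terms of the exterior $L^2$ mass $M_0$, and then a careful summation over dyadic-in-$\log$ shells $R_j=R_0e^{-j}$ using the recursion $\x_j\le\x_{j-1}+1$ and the dichotomy between the index sets $A$ (where $\x_j$ drops strictly) and $B$ (where the remaining gradient budget $K_j$ decreases fast). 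Nothing in your proposal plays the role of either step; you acknowledge the obstacle but do not overcome it, so the sufficiency direction of (B) is not actually proved.

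A smaller remark: in (C)(3)$\Rightarrow$(4) you propose writing $g=g_\e+h_\e$ with $h_\e$ continuous and supported in $\de\le|u|\le M$. Since $g$ is only assumed Borel, $h_\e$ cannot literally equal $g$ on that range and be continuous; the paper's proof via $g^L$ and dominated convergence has the same latent continuity issue, so this is not a point where you differ from the paper, but it is worth flagging if you want a fully rigorous write-up.
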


\begin{rem}
1) This theorem shows that 
 the true threshold for the $L^2$ Trudinger-Moser inequality in the whole space under the condition $\|\nabla \fy \|_{L^2(\R^2)}\leq1 $ is given by the functional in \eqref{Mos4}, and concentration of compactness happens only for it. 

2)  As we will see in the proof, 
 the concentration sequence constructed in (3)  is very different from those in the higher dimensional case---it must contain nontrivial tail going to the spatial infinity which is the main contribution to $L^2$, even though the main contribution to $G$ is
 the concentrating part. This is also different from the concentration in the original Trudinger-Moser inequality \eqref{Mos3} (see \cite{Lions1}). 

3) In the Orlicz space corresponding to $g(u)\sim \expo^{\al|u|^2}$, \cite{BMM11} gives a more precise description of concentration compactness, in terms of the profile decomposition. However, the above phonomena for $g(u)\sim \expo^{2|u|^2/K}|u|^{-2}$ do not seem to be observable in a linear setting such as in the Orlicz space. 
\end{rem}
 
In Section \ref{conc}, we prove the necessity of (1) and (3) in Theorem \ref{main}, by constructing sequences $\fy_n$ obtained from rescaling of Moser's example. 
In Section \ref{prel}, we study the optimal growth of a function in the exterior of the ball when the $L^2$ and $\dot H^1$ norm are given. 
This is used in proving the sufficiency or the main part of Theorem \ref{main}  in Section \ref{24}. 

\section{Proof of the necessity of (1) and (3): Moser's example} \label{conc}
First we consider the much easier case with the condition as $|u|\to 0$. Let $\fy_n(x)$ be a sequence of radial functions in $H^1(\R^2)$ defined by 
\EQ{
 \fy_n(x)=\CAS{ a_n &(|x|<R_n),\\ a_n(1-|x|+R_n) &(R_n<|x|<R_{n}+1)\\ 0 &(|x|>R_n+1)}}
for some sequences $a_n\to 0$ and $R_n\to\I$ chosen later. We have 
\EQ{
 \pt \|\fy_n\|_{L^2}^2 \sim a_n^2R_n^2, 
 \pq \|\na\fy_n\|_{L^2}^2 \sim a_n^2R_n, 
 \pq G(\fy_n) \ge \pi R_n^2 g(a_n).}

If (1) is violated by $\Limsup_{|u|\to 0}|u|^{-2}g(u)=\I$, then we can find a sequence $a_n\searrow 0$ such that $g(a_n)\ge n|a_n|^2$. Let $R_n=a_n^{-1/2}+a_n^{-1}n^{-1/4}$. 
Then $R_n\to\I$, $a_nR_n\to 0$ and $G(\fy_n)\ge na_n^2R_n^2\to\I$. 

If (3) is violated by $\Limsup_{|u|\to 0}|u|^{-2}g(u)>0$, then we can find a sequence $a_n\searrow 0$ and $\de>0$ such that $g(a_n)\ge \de|a_n|^2$. Let $R_n=1/a_n$. 
Then $R_n\to\I$, $a_nR_n=1$, $a_n^2R_n\to 0$ and $G(\fy_n)\ge \de a_n^2R_n^2 \ge \de$. 

It remains to treat the case where the condition for $|u|\to\I$ fails. 
First, we recall the following fundamental example of Moser:  Let  $f_{\alpha}$ be  defined by:
\begin{eqnarray*}
 f_{\alpha}(x)&=&\; \left\{
\begin{array}{cllll}0 \quad&\mbox{if}&\quad
|x|\geq 1,\\\\ -\frac{\log|x|}{\sqrt{2\alpha\pi}} \quad
&\mbox{if}&\quad \expo^{-\alpha}\leq |x|\leq 1 ,\\\\
\sqrt{\frac{\alpha}{2\pi}}\quad&\mbox{if}&\quad |x|\leq {\rm
e}^{-\alpha},
\end{array}
\right.
\end{eqnarray*} 
where $\alpha>0$.  One can also write $f_\alpha$ as 
\EQ{
f_{\alpha}(x)=\sqrt{\frac{\alpha}{2\pi}}\,{\mathbf
L}\left(\frac{- \log|x|}{\alpha}\right),
}
where
\begin{eqnarray*}
{\mathbf L}(t)&=&\; \left\{
\begin{array}{cllll}0 \quad&\mbox{if}&\quad
t\leq 0,\\ t \quad
&\mbox{if}&\quad 0\leq t\leq 1 ,\\
1 \quad&\mbox{if}&\quad t\geq 1.
\end{array}
\right.
\end{eqnarray*}
  Straightforward  computations show that
$\|f_{\alpha}\|_{L^2(\R^2)}^2=\frac{1}{4\alpha}(1-{\rm
e}^{-2\alpha})-\frac{1}{2}\expo^{-2\alpha}$ and $\|\nabla
f_{\alpha}\|_{L^2(\R^2)}=1$. 

In order to fit this example in our estimate on $\R^2$, we need to make sure that the $L^2$ norm does not go to zero. This requires a rescaling of Moser's example. 
Choose sequences $1\ll b_k\nearrow\I$ and $K_k\nearrow K$ such that 
\EQ{
 c_k:=\expo^{-2 b_k^2/K_k}b_k^2 g(b_k)\to\Limsup_{|u|\to\I}\expo^{-2|u|^2/K}|u|^2g(u),}
and let $R_k=e^{-  b_k^2/K_k}$.  We define  a radial function  $\psi_k(r)\in H^1 (\R^2) $ by 
\EQ{
 \psi_k(r) = \CAS{b_k &(r<R_k) \\ b_k\frac{|\log r|}{|\log R_k|} & (R_k\le r<1) \\ 0 &(r\ge 1)}}
Then we have
\EQ{
 \pt \|\na\psi_k\|_{L^2}^2 = 2\pi b_k^2 \int_{R_k}^1 \frac{dr}{r|\log R_k|^2} = 2\pi K_k< 2\pi K,
 \pr \|\psi_k\|_{L^2}^2 \sim \frac{2\pi b_k^2}{|\log R_k|^2} = \frac{2  \pi K_k^2}{  b_k^2}, 
 \pq G(\psi_k) \ge 2\pi R_k^2 g(b_k) = \frac{2\pi c_k}{b_k^2}, }
and so $G(\psi_k)\|\psi_k\|_{L^2}^{-2}\ge c_k/K^2$. 

Now let $\fy_k=\psi_k(x/S_k)$, where we choose $S_k=b_k$  if  (3) fails 
 (i.e.~if $c_k>0$), and $S_k=o(b_k)$ such that $S_k^2 c_k /b_k^2\to \I$  if  (1) fails  (i.e.~if $c_k=\I$). 
In both cases $\fy_k$ is bounded in $H^1$  and satisfies 
\EQ{ 
 \pt \|\na\fy_k\|_{L^2}^2 = \|\na\psi_k\|_{L^2}^2   = 2\pi K_k< 2 \pi K,
 \pr \|\fy_k\|_{L^2}^2  = S_k^2  \|\psi_k\|_{L^2}^2    \sim 2  \pi K_k^2   \frac{  S_k^2  }{  b_k^2}, 
 \pq G(\fy_k) \ge 2\pi R_k^2 g(b_k) = \frac{2\pi c_k   S_k^2  }{b_k^2} }
Moreover, 
 $\fy_k(x)\to 0$ for every $x\not=0$, because $|\fy_k(x)|\lec \e$ if $|x|\ge S_k\expo^{-\e b_k}=o(1)$ for any $\e>0$. 
This ends the proof for the necessity of (1) and (3). 

\section{Radial Trudinger-Moser and optimal descending} \label{prel}
In this section, we prove the following theorem that will be used in the proof of point (2) of the main  theorem \ref{main}. This can be regarded as the exponential version of the radial Sobolev inequality. 
\begin{thm} \label{rad TM}
There exists a constant $C>0$ such that for any radial $\fy\in H^1$, 
\EQ{ 
 \|\na\fy\|_{L^2(|x|>R)}^2\le 2\pi K \implies \frac{\expo^{2\fy(R)^2/K}}{\fy(R)^2/K^2} \le C\|\fy/R\|_{L^2(|x|>R)}^2.}
\end{thm}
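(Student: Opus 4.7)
The plan is to pass to logarithmic polar coordinates along the radial variable, reducing the claim to a one-dimensional weighted estimate that can be attacked by a Cauchy--Schwarz pointwise bound followed by a Laplace-method evaluation, yielding the sharp exponential rate. Setting $s=\log(r/R)$ and $v(s):=\fy(Re^s)$ for $s\ge 0$, the hypothesis transforms to
\[
v(0)=a:=\fy(R),\qquad \int_0^\I(v'(s))^2\,ds\le K,
\]
while a direct change of variables gives
\[
\|\fy/R\|_{L^2(|x|>R)}^2 = 2\pi\int_0^\I v(s)^2 e^{2s}\,ds.
\]
So it suffices to prove $\int_0^\I v^2 e^{2s}\,ds\gec K^2 e^{2a^2/K}/a^2$; we may assume $a>0$.

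Next I would invoke the elementary Cauchy--Schwarz pointwise bound $|v(s)-a|^2\le s\int_0^s(v')^2\,dt\le sK$, hence $v(s)\ge a-\sqrt{sK}$, which on the interval $[0,s_0]$ with $s_0:=a^2/K$ reads $v(s)\ge a(1-\sqrt{s/s_0})$. Plugging this into the weighted integral and substituting $\tau=s_0-s$,
\[
\int_0^\I v^2 e^{2s}\,ds\ge a^2 e^{2s_0}\int_0^{s_0}\bigl(1-\sqrt{1-\tau/s_0}\bigr)^2 e^{-2\tau}\,d\tau.
\]
The elementary inequality $1-\sqrt{1-x}\ge x/2$ on $[0,1]$ (verified by squaring both sides) yields $(1-\sqrt{1-\tau/s_0})^2\ge\tau^2/(4s_0^2)$, and for $s_0\ge 1$ the residual integral $\int_0^{s_0}\tau^2 e^{-2\tau}\,d\tau$ is bounded below by a positive absolute constant. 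Substituting $s_0=a^2/K$ then recovers $\int_0^\I v^2 e^{2s}\,ds\gec K^2 e^{2a^2/K}/a^2$, which is the desired claim in the principal regime $a^2\ge K$. The complementary range $a^2<K$, where the exponential factor is bounded, reduces to a polynomial estimate that follows by similar but simpler direct manipulations.

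The main obstacle is that the Cauchy--Schwarz pointwise bound on $v$ is sharpest near $s=0$, whereas the weight $e^{2s}$ concentrates the $L^2$ mass near $s=s_0$---precisely where that bound degenerates to zero. A coarser estimate such as $v\ge a/2$ on $[0,s_0/4]$ would produce only the weaker rate $e^{a^2/(2K)}$, losing a factor of four in the exponent. The crux is that $(1-\sqrt{s/s_0})^2$ vanishes quadratically at $s=s_0$, and this quadratic vanishing---combined with the exponential weight via Laplace's method---is exactly what yields the correct polynomial prefactor $1/a^2$ multiplying the sharp exponential $e^{2a^2/K}$.
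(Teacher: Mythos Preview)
Your main argument for the regime $a^2\ge K$ is correct and takes a genuinely different route from the paper. The paper first rescales to the minimization problem $\mu(h)=\inf\{\|\fy\|_{L^2(|x|>1)}:\fy(1)=h,\ \|\fy_r\|^2_{L^2(|x|>1)}\le 2\pi\}$ for $h>1$, then discretizes to a sequence problem $\mu_d(h)$ on the lattice $r=e^k$, and finally proves $\mu_d(\sqrt n)\gec e^n/\sqrt n$ by a contradiction argument exploiting the sharpness of $\ell^2\subset\ell^1$ on $\{0,\dots,n\}$. You stay in continuous variables and replace the contradiction by a direct Laplace-type lower bound; this is shorter and more elementary. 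The paper's discretization, on the other hand, makes the extremizer (Moser's function, i.e.\ the constant sequence) transparent and gives the matching upper bound $\mu_d\lec e^{h^2}/h$ for free, yielding the full asymptotic $\mu(h)\sim e^{h^2}/h$ rather than just the one-sided bound needed for Theorem~\ref{rad TM}.

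One caveat: your throwaway remark that the complementary range $a^2<K$ ``follows by similar but simpler direct manipulations'' is incorrect. In that range the left-hand side behaves like $K^2/a^2\to\I$ as $a\to 0$, whereas a compactly supported radially decreasing $\fy$ with $\fy(R)=a$ can easily satisfy $\|\fy/R\|_{L^2(|x|>R)}^2\lec a^2\to 0$, so the inequality in fact fails there. This is consistent with the paper, which proves Theorem~\ref{opt desc} only for $h>1$ (equivalently $\fy(R)^2>K$ after rescaling); the statement of Theorem~\ref{rad TM} should be read with that implicit restriction, and that is the only regime used in the application in Section~\ref{24}.
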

The function $\expo^{\fy^2}/\fy$ is optimal in the above estimate. More precisely, we have 
\begin{thm} \label{opt desc}
Let 
\EQ{
 \pt \mu(h) := \inf\{\|\fy\|_{L^2(|x|>1)} \mid \fy\in H^1_{r},\ \fy(1)=h,\ \|\fy_r\|_{L^2(|x|>1)}^2 \le 2\pi\},}
for $h> 1$. Then we have $\mu(h)\sim \expo^{h^2}/h$ for $h> 1$.
\end{thm}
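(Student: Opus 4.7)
I would prove the equivalence $\mu(h) \sim e^{h^2}/h$ by establishing the two matching bounds separately. The lower bound is essentially a direct rereading of Theorem \ref{rad TM}, while the upper bound is witnessed by an explicit Moser-type logarithmic profile; thus the present statement mostly packages Theorem \ref{rad TM} together with its sharpness.

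For the lower bound $\mu(h) \gtrsim e^{h^2}/h$, I would apply Theorem \ref{rad TM} with $R=1$ and $K=1$ to any competitor $\varphi\in H^1_r$ satisfying $\varphi(1)=h$ and $\|\nabla\varphi\|_{L^2(|x|>1)}^2\le 2\pi$. That theorem immediately yields
\[
\frac{e^{2h^2}}{h^2}\le C\,\|\varphi\|_{L^2(|x|>1)}^2,
\]
so $\|\varphi\|_{L^2(|x|>1)}\gtrsim e^{h^2}/h$ uniformly in $h>1$, and taking the infimum over admissible $\varphi$ produces $\mu(h)\gtrsim e^{h^2}/h$.

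For the upper bound $\mu(h)\lesssim e^{h^2}/h$, I would exhibit an explicit radial competitor. Setting $R:=e^{h^2}>1$, I take the truncated Moser-type profile
\[
\varphi(r) = \CAS{ h & (0\le r\le 1), \\ h\,\dfrac{\log(R/r)}{\log R} & (1\le r\le R), \\ 0 & (r\ge R), }
\]
which lies in $H^1_r$ and satisfies $\varphi(1)=h$. A direct computation in polar coordinates gives $\|\varphi_r\|_{L^2(|x|>1)}^2 = 2\pi h^2/\log R = 2\pi$, saturating the gradient constraint. The substitution $u=\log(R/r)$ evaluates $\int_1^R (\log(R/r))^2\,r\,dr = R^2\int_0^{\log R} u^2 e^{-2u}\,du \le R^2/4$, whence
\[
\|\varphi\|_{L^2(|x|>1)}^2 \le \frac{2\pi h^2}{(\log R)^2}\cdot\frac{R^2}{4} = \frac{\pi}{2}\,\frac{e^{2h^2}}{h^2},
\]
producing $\mu(h)\le \sqrt{\pi/2}\,e^{h^2}/h$. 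Combined with the lower bound, this establishes the claim.

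The only real obstacle is that Theorem \ref{rad TM} must already be in hand; since it is, the remaining work is purely to check that the same log profile which saturates Theorem \ref{rad TM} is also the near-optimizer of the minimization problem defining $\mu(h)$, which is the routine calculation above. The rescaling $R=e^{h^2}$ is the unique choice (up to constants) that simultaneously fits the gradient budget $2\pi$ and exhibits the exponential decay on the scale $R\sim e^{h^2}$, matching Theorem \ref{rad TM}'s predicted threshold.
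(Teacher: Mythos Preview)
Your upper-bound argument via the logarithmic Moser profile is correct and is essentially the continuous analogue of the paper's optimizer (the constant sequence $a=(1,\dots,1)/\sqrt{n}$ in the discrete problem).

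The lower bound, however, is circular. You invoke Theorem~\ref{rad TM} to get $\mu(h)\gtrsim e^{h^2}/h$, and you justify this by saying ``Theorem~\ref{rad TM} must already be in hand; since it is''. But in the paper's logical structure it is \emph{not} in hand: immediately after stating Theorem~\ref{opt desc} the paper writes ``Obviously, the first theorem follows from the second one, by rescaling'', i.e.\ Theorem~\ref{rad TM} is a corollary of Theorem~\ref{opt desc}, and there is no independent proof of Theorem~\ref{rad TM} anywhere in the paper. So your lower bound assumes exactly what has to be shown.

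The paper instead establishes the lower bound directly, without appealing to Theorem~\ref{rad TM}. It discretizes the problem by sampling $\varphi$ at the geometric lattice $r=e^k$, reduces $\mu(h)$ to the sequence quantity $\mu_d(h)=\inf\{\|a\|_{(e)}:\|a\|_1=h,\ \|a\|_2\le 1\}$ (this reduction is the first lemma), and then proves $\mu_d(h)\gtrsim e^{h^2}/h$ by a short contradiction argument exploiting the sharpness of $\ell^2\subset\ell^1$ on $n$ points (the second lemma). This discrete lower bound is the actual content; once $\mu(h)\sim e^{h^2}/h$ is established, Theorem~\ref{rad TM} drops out by rescaling. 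To repair your argument you would need to supply an independent proof of the lower bound---either the paper's discretization, or some direct ODE/comparison argument for the continuous minimization problem---rather than quoting Theorem~\ref{rad TM}.
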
 
Obviously, the first theorem follows from the second one, by rescaling. 
To prove the latter, we consider the descrete version: 
\EQ{
  \mu_d(h):=\inf\{ \|a\|_{(e)} \mid \|a\|_1=h,\ \|a\|_2\le 1\}.}
for $h> 1$, where the norms on any sequence $a=(a_n)_{n=0}^\I$ are defined by
\EQ{
 \|a\|_p^p = \sum_{n=0}^\I |a_n|^p, \pq \|a\|_{(e)}^2 = \sum_{n=0}^\I \expo^{2n} a_n^2.}
\begin{lem}
We have $\mu(h)\sim\mu_d(h)$ for $h> 1$.
\end{lem}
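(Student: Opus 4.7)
The plan is to pass to the logarithmic variable $s=\log r$, under which a radial $\fy$ satisfying the constraints in $\mu(h)$ corresponds bijectively to $u(s):=\fy(e^s)$ on $[0,\I)$ with $u(0)=h$ and $\int_0^\I |u'(s)|^2\,ds\le 1$; the substitution $dr=e^s\,ds$ yields $\|\na\fy\|_{L^2(|x|>1)}^2 = 2\pi\int_0^\I |u'|^2\,ds$ and $\|\fy\|_{L^2(|x|>1)}^2 = 2\pi\int_0^\I u^2 e^{2s}\,ds$. Thus $\mu(h)^2 = 2\pi\cdot \inf A(u)$ with $A(u):=\int_0^\I u^2 e^{2s}\,ds$, and the lemma reduces to comparing this continuous infimum with $\mu_d(h)^2$. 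Both directions are built around the natural pairing $a_n = u(n)-u(n+1)$ (equivalently $u(n)=\sum_{k\ge n}a_k$ when $u$ is decreasing with $u(\I)=0$).

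For $\mu(h)\lec\mu_d(h)$, given admissible $(a_n)$ (WLOG $a_n\ge 0$), the plan is to let $b_n:=\sum_{k\ge n}a_k$ and take $u$ to be the piecewise-linear function with $u(n)=b_n$. Immediately $u(0)=h$ and $\int |u'|^2\,ds = \|a\|_2^2\le 1$. For the weighted integral, the splitting $b_n = \sum_{k\ge n}(a_k e^{(k-n)/2})e^{-(k-n)/2}$ followed by Cauchy--Schwarz gives $b_n^2 \lec \sum_{k\ge n} e^{k-n} a_k^2$; since $u\le b_n$ on $[n,n+1]$, this yields $\int_n^{n+1} u^2 e^{2s}\,ds \lec b_n^2 e^{2n}$, and exchanging the order of summation in $\sum_n e^n \sum_{k\ge n} e^k a_k^2$ produces $A(u)\lec \sum_k e^{2k}a_k^2 = \|a\|_{(e)}^2$. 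Extending $\fy(r):=u(\log r)$ by the constant $h$ on $\{|x|\le 1\}$ gives an admissible $\fy$ with $\|\fy\|_{L^2(|x|>1)}\lec \|a\|_{(e)}$.

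For the harder direction $\mu_d(h)\lec \mu(h)$, given admissible $\fy$ with $A:=A(u)$, I would first reduce to the case $u\ge 0$ monotone non-increasing with $u(0)=h$. Replacing $u$ by the decreasing rearrangement $v$ of $|u|$ on $[0,\I)$ does not increase $\int|u'|^2$ by P\'olya--Szeg\"o, and does not increase $A$ either because $e^{2s}$ is non-decreasing: for each $\lambda$, among all sets of given measure in $[0,\I)$ the initial interval minimizes the $e^{2s}$-integral, so a layer-cake argument gives $\int v^2 e^{2s}\le \int u^2 e^{2s}$. Since $v(0)=\sup|u|\ge h$ and $v\searrow 0$, translating $v$ by the first $s_0$ with $v(s_0)=h$ supplies a monotone decreasing profile with initial value $h$ (and only further decreases both $A$ and $\int|u'|^2$). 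Then setting $a_n:=u(n)-u(n+1)\ge 0$, telescoping gives $\|a\|_1=h$, Cauchy--Schwarz on each $[n,n+1]$ gives $\|a\|_2^2 \le \int|u'|^2\le 1$, and for $n\ge 1$ the bounds $a_n\le u(n)$ together with monotonicity $u\ge u(n)$ on $[n-1,n]$ give $e^{2n}a_n^2 \lec \int_{n-1}^n u^2 e^{2s}\,ds$, which sums to $\sum_{n\ge 1} e^{2n}a_n^2 \lec A$. The boundary term $a_0^2\le h^2$ is absorbed by the elementary lower bound $A\gec h^2$: from $|u(s)-h|\le (\int_0^s |u'|^2)^{1/2} s^{1/2}\le s^{1/2}$ we get $u\ge h/2$ on $[0,h^2/4]$, and hence $A\ge (h^2/4)(e^{h^2/2}-1)/2 \gec h^2$ whenever $h>1$.

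The main technical obstacle is the monotonicity reduction in the lower-bound direction: one must verify simultaneously that the decreasing rearrangement on $[0,\I)$ decreases the weighted $L^2$ norm against the increasing weight $e^{2s}$, does not increase $\int|u'|^2$, and admits a translation restoring the boundary value $h$ without spoiling either bound. Once this reduction is in place, the discrete-continuous comparison is purely a matter of summation by parts combined with the observation that the $n=0$ boundary contribution $h^2$ is harmlessly dominated by $A$ itself, because the constraint $u(0)=h$ with $h>1$ already forces $A$ to be at least of order $h^2$.
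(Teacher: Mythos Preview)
Your argument is correct and follows the same strategy as the paper: pass to the logarithmic variable, discretize at the integer lattice, compare $\sum_j h_j^2 e^{2j}$ with $\|a\|_{(e)}^2$, and absorb the boundary contribution $h^2$ into $A$ via the energy bound $|u(s)-h|\le s^{1/2}$. The only substantive difference is that you justify the reduction to monotone non-increasing $u$ through P\'olya--Szeg\H{o} on $[0,\infty)$ together with the layer-cake inequality against the increasing weight $e^{2s}$, whereas the paper simply asserts that it suffices to take $\fy_r\le 0\le\fy$; your Cauchy--Schwarz/Fubini treatment of $\sum b_n^2 e^{2n}\lec\|a\|_{(e)}^2$ is the same computation the paper does via ``Young on $\Z$''.
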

\begin{proof}
$\mu_d$ is naturally obtained by optimizing the energy for given values on the lattice. 
For $\mu(h)$, it suffices to consider radial $\fy\in H^1$ satisfying $\fy_r\le 0 \le \fy$. 

Let $h_k=\fy(\expo^k)$ and $a_k=h_k-h_{k+1}$ for $k=0,1,2\dots$. 
We can optimize the energy on each interval $[\expo^k,\expo^{k+1}]$ by replacing $\fy$ with 
\EQ{
 \psi(r) = a_k |\log(\expo^{-k-1}r)| + h_{k+1} \pq(\expo^k\le r\le \expo^{k+1}).}
Then we have $\psi(1)=h_0=\fy(1)=h$ and 
\EQ{
 \int_{\expo^k}^{\expo^{k+1}}\psi_r(r)^2 rdr = a_k^2 = (\fy(\expo^k)-\fy(\expo^{k+1}))^2
 \le \int_{\expo^k}^{\expo^{k+1}}\fy_r(r)^2 rdr,}
where the last inequality follows from Schwarz. For the $L^2$ norm we have 
\EQ{
 \|\psi\|_{L^2(r>1)}^2 \lec \sum_{k=0}^\I h_k^2 \expo^{2k} \lec h_0^2 + \int_0^\I \|\fy\|_{L^2(r>1)}^2,}
where $h_0^2$ is estimated by using the energy as follows. For $1<r<\expo^{1/4}$ we have 
\EQ{
 \fy(1)-\fy(r) \le \int_r^1\fy_r(s)ds \le \sqrt{\int_1^{\expo^{1/4}}|\fy_r|^2rdr \int_1^{\expo^{1/4}}\frac{dr}{r}} \le \frac{1}{2},}
which implies $\fy(r)\ge h_0/2$ since $h_0=h> 1$, and so 
\EQ{
 h_0^2 \lec \int_1^{\expo^{1/4}}|\fy|^2rdr \lec \|\fy\|_{L^2}^2.}

Hence for $\mu(h)$ it suffices to consider such $\psi$. Moreover we have 
\EQ{
 \|a\|_2^2 \le \|\psi_r\|_{L^2(|x|>1)}^2/(2\pi) \le 1, \pq 
 \|a\|_1 = h_0 = h,} 
and 
\EQ{
 \|\psi\|_{L^2(r>1)}^2 \sim \sum_{j\ge 0}h_j^2 \expo^{2j}
 \pt= \sum_{j} \sum_{k,l\ge j} a_k a_l \expo^{2j}
 \pr=\sum_{k,l} a_k a_l \sum_{j\le\min(k,l)}\expo^{2j}
 \sim \sum_{k\le l} a_k a_l \expo^{2k} \sim \|a\|_{(e)}^2,}
where $\lec$ for the last equivalence follows from Young on $\Z$. 
\end{proof}
Now,  Theorem \ref{opt desc}, and hence Theorem \ref{rad TM}, follow from 
\begin{lem}
For $h>1$ we have 
\EQ{
 \mu_d(h) \sim \frac{\expo^{h^2}}{h}.}
\end{lem}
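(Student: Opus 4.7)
The plan is to prove the two bounds $\mu_d(h) \lec \expo^{h^2}/h$ and $\mu_d(h) \gec \expo^{h^2}/h$ separately. The upper bound comes from an explicit near-optimizer (a flat truncated sequence of length $\lceil h^2\rceil$), and the lower bound from a single Cauchy--Schwarz split of $\|a\|_1$ at an integer threshold just below $h^2$.

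For the upper bound, I would take $N := \lceil h^2 \rceil$ and set $a_n = h/N$ for $0 \le n < N$, $a_n = 0$ for $n \ge N$. Then $\|a\|_1 = h$ by construction, $\|a\|_2^2 = h^2/N \le 1$ since $N \ge h^2$, and the geometric sum yields
\EQ{
\|a\|_{(e)}^2 = \frac{h^2}{N^2}\sum_{n=0}^{N-1}\expo^{2n} \le \frac{h^2\,\expo^{2N}}{N^2(\expo^2-1)} \lec \frac{\expo^{2h^2}}{h^2},
}
using $N \le h^2+1$ and $N \gec h^2$. Hence $\mu_d(h) \lec \expo^{h^2}/h$.

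For the lower bound, fix any nonnegative sequence $a$ with $\|a\|_1 = h$ and $\|a\|_2 \le 1$, and split $h = \sum_{n<N} a_n + \sum_{n \ge N} a_n$ for an integer $N$ to be chosen. Cauchy--Schwarz on the first sum gives $\sum_{n<N} a_n \le \sqrt{N}\,\|a\|_2 \le \sqrt{N}$, and pairing $a_n$ against the weight $\expo^{-n}$ on the second sum gives
\EQ{
\sum_{n\ge N} a_n = \sum_{n\ge N} (\expo^n a_n)\,\expo^{-n} \le \|a\|_{(e)}\Bigl(\sum_{n\ge N}\expo^{-2n}\Bigr)^{1/2} \lec \|a\|_{(e)}\,\expo^{-N}.
}
Adding the two estimates yields $\|a\|_{(e)} \gec (h - \sqrt{N})\,\expo^{N}$ whenever $\sqrt{N} < h$. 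I would then choose $N := \lfloor h^2 - 1\rfloor$, a nonnegative integer for $h > 1$: the bound $\sqrt{h^2-1} \le h - 1/(2h)$ (coming from $\sqrt{1-1/h^2}\le 1-1/(2h^2)$) gives $h - \sqrt{N} \ge 1/(2h)$, while $N \ge h^2 - 2$ gives $\expo^N \gec \expo^{h^2}$. Combining, $\|a\|_{(e)} \gec \expo^{h^2}/h$, and taking the infimum over admissible $a$ yields the claim.

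The only real obstacle is the bookkeeping around the integer threshold: the heuristic optimum of $(h-\sqrt{N})\,\expo^N$ sits near $N\approx h^2$, but $\sqrt{N}$ must stay strictly below $h$ by a margin of order $1/h$ to produce the correct factor $1/h$, while $N$ must remain within $O(1)$ of $h^2$ so that $\expo^N$ retains the full $\expo^{h^2}$. The choice $N = \lfloor h^2-1\rfloor$ balances both requirements, and the hypothesis $h > 1$ is exactly what makes $N$ nonnegative and validates the elementary inequality $\sqrt{h^2-1} \le h - 1/(2h)$ uniformly.
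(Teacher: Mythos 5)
Your proof is correct, and while the upper bound uses essentially the paper's construction (a flat sequence of length roughly $h^2$), your lower bound takes a genuinely different route. The paper argues by contradiction: assuming $\|a\|_{(e)}$ is too small, it first truncates the tail, then uses the identity $\|a'\|_1^2 = n\|a'\|_2^2 - \tfrac12\sum_{j,k\le n}(a_j-a_k)^2$ to show the truncated sequence must be nearly constant, whence $|a_n|\gec 1/\sqrt n$ and $\|a\|_{(e)}\gec \expo^n/\sqrt n$. Your version instead runs a single direct estimate: split $\|a\|_1$ at the threshold $N$, bound the head by Cauchy--Schwarz against $\|a\|_2$, bound the tail by Cauchy--Schwarz against the exponential weight, and optimize $N$. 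This is shorter, avoids the contradiction and the near-flatness analysis entirely, and also dispenses with the paper's preliminary reduction (via monotonicity of $\mu_d$) to $h=\sqrt n$ with $n$ integer, since your choice $N=\lfloor h^2-1\rfloor$ handles arbitrary $h>1$ directly. What the paper's argument buys in exchange is structural information about near-optimizers (they are forced to be close to the constant Moser-type sequence), which is not visible from your two-line Cauchy--Schwarz; for the purposes of this lemma, though, your argument is cleaner. One small point worth making explicit: passing from a general $a$ to a nonnegative one is legitimate because all three norms $\|\cdot\|_1$, $\|\cdot\|_2$, $\|\cdot\|_{(e)}$ depend only on $|a_n|$.
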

This is essentially achieved by constant sequences of finite length, corresponding to the Moser's function in the continuous version. 
The $(e)$ norm determines the fall-off or the length of the sequence, and then optimization of the embedding $\ell^2\subset\ell^1$ on the finite length forces it to be a constant. 
\begin{proof}
Since $\mu_d(h)$ is increasing in $h$, it suffices to show $\mu_d(\sqrt{n})\sim \expo^{n}/\sqrt{n}$ 
for all integer $n$. 
$\lec$ is easily seen by choosing $a=(1,\dots,1)/\sqrt{n}$, so we consider $\gec$. Suppose by contradiction that for some $\e\ll 1$, $n\gg 1$ and sequence $a$ we have 
\EQ{
 \|a\|_2\le 1, \pq \|a\|_1 = \sqrt{n}, \pq \|a\|_{(e)}^2\le \frac{\e^2  \expo^{2n}}{n}.}
From the last condition we get 
\EQ{
 n\le j \implies |a_j|\lec \frac{\e}{\sqrt{n}}\expo^{n-j},}
and so letting $a_j'=a_j$ for $j\le n$ and $a_j'=0$ for $j>n$, we get 
\EQ{ \label{l1 low bd}
 \|a'\|_1 \ge \|a\|_1-\sum_{j>n}|a_j| \ge \sqrt{n}-\frac{C\e}{\sqrt{n}}.}
Then by the support of $a'$ we have 
\EQ{
 n-C\e \le \|a'\|_1^2 = n\|a'\|_2^2 - \sum_{j,k\le n}(a_j-a_k)^2/2
 \le n - \sum_{j,k\le n}(a_j-a_k)^2/2,}
hence 
\EQ{
 \sum_{j,k\le n}(a_j-a_k)^2 \lec \e.}
Choose $m\le n$ so that $\min_{j\le n}|a_j|=|a_m|$. Then we get from the above estimate 
\EQ{
 \|a'\|_1 - n|a_m| \le \|a_j-a_m\|_{\ell^1(j\le n)}
 \le \sqrt{n}\|a_j-a_m\|_{\ell^2(j\le n)} \lec \sqrt{n\e}.}
Combining it with \eqref{l1 low bd}, we get
\EQ{
 |a_m| \gec \sqrt{n}/n = 1/\sqrt{n},}
provided that $\e>0$ is small enough. Since $|a_n|\ge|a_m|$, we obtain $\|a\|_{(e)}\gec \expo^n/\sqrt{n}$,  
which yields a contradiction. 
Hence, we deduce that  $\mu_d(h)^2 \sim \expo^{2h^2}/h^2$. 
\end{proof}

\section{Proof of (2) and (4) of Theorem \ref{main}} \label{24}
\begin{proof}

To prove (2) of the theorem \ref{main}, it suffices to show  that 
\EQ{\label{inG}
 G(\fy) = \int_{\R^2} \min(|\fy|^2,|\fy|^{-2})\expo^{2|\fy|^2} dx \lec \|\fy\|_{L^2}^2,}
for all non-negative, radially decreasing $\fy\in H^1(\R^2)$ satisfying $\|\na\fy\|_{L^2}^2=2\pi$. 
Here we took $K=1$. 
 Fix such a radial function $\fy(x)=\fy(r)$ and let $g(s) = \min(|s|^2,|s|^{-2})\expo^{2|s|^2}  $. 

Choose $R_0>0$ such that $\|\na\fy\|_{L^2(r>R_0)}^2=2\pi K_0$, where $K_0 = \ka\in(2/3,1)$ is a constant which will be determined later. 
The region $r>R_0$ is subcritical and easily estimated. 
Indeed, if $\fy(r) \leq 2$ for all $r \geq R_0$, then  \eqref{inG} in the region 
$\{  r > R_0 \}$ follows from  
 $g(\fy)\lec|\fy|^2$ for $|\fy| \lec 2 $. If not,  
we  choose $R>R_0$ so that $\fy(R)=1$.  For $R_0<r<R$ we have by Schwarz 
\EQ{
 \fy(r)-1=\int_r^R|\fy_r| dr\le \sqrt{K_0\log(R/r)}.}
Let $K'=(1+K_0)/2$. Then 
\EQ{
  \frac{\fy(r)^2}{K'} \le \frac{(\fy(r)-1)^2}{K_0}+ \frac{1}{K'-K_0} \le \log(R/r)+\frac{2}{1-K_0},}
and so
\EQ{
 \pt\int_{R_0}^R \expo^{2\fy(r)^2} rdr \lec \int_{R_0}^R (R/r)^{2K'}rdr 
 \lec R^2-(R_0)^2 \lec \int_{R_0}^R |\fy(r)|^2 rdr}
where we have used that $\fy(r) \geq 1$ in the region $R_0 \leq r \leq R$. 
Now, since $g(\fy(r))\lec|\fy(r)|^2$ for $r>R$, we thus obtain  that 
\EQ{
 \pt\int_{R}^\infty g(\fy(r))  rdr \lec \int_{R}^\infty |\fy(r)|^2 rdr}
and hence 
\EQ{
 \int_{|x|>R_0} g(\fy(x))dx \lec \int_{|x|>R_0}|\fy(x)|^2 dx = 2\pi M_0.}

Now we proceed to the main part $r<R_0$. Let 
\EQ{
 \pt R_j=R_0\expo^{-j}, \pq h_j=\fy(R_j), \pq a_j=\sqrt{\int_{R_j}^{R_{j-1}}|\fy_r|^2 rdr},
 \pr K_j=\int_{R_j}^\I|\fy_r|^2 rdr, \pq M_0=\int_{R_0}^\I |\fy|^2rdr.}
Then Theorem \ref{rad TM} gives 
\EQ{
 \frac{\expo^{2h_0^2/K_0}}{h_0^2}R_0^2 \lec M_0/K_0^2 \sim M_0.}
By the monotone convergence theorem, we may assume that $\fy$ is constant on $|x|<R_N$ for some $N\in\N$. Then it suffices to show that 
\EQ{ \label{red sum}
 \sum_{j=0}^N \frac{\expo^{2h_j^2}}{h_j^2}R_j^2 \lec M_0.}
First we derive a bound for each $j$. By Schwarz inequality,  we have 
\EQ{
 h_j-h_{j-1} = \int_{R_j}^{R_{j-1}}|\fy_r| dr \le a_j,}
and so, using that $K_j=K_{j-1}+a_j^2$, we get that 
\EQ{ \label{young}
 h_j^2 \le (h_{j-1}+a_j)^2 = \frac{K_jh_{j-1}^2}{K_{j-1}} + K_j -\frac{(a_jh_{j-1}-K_{j-1})^2}{K_{j-1}} \le \frac{K_jh_{j-1}^2}{K_{j-1}} + K_j.}
Hence,  
$$   \frac{h_{j}^2}{K_{j}}   \leq     \frac{h_{j-1}^2}{K_{j-1}} + 1.  $$
Now,  we define $H_j$ and $\x_j$ by 
\EQ{
 H_0=h_0, \pq H_j=H_{j-1}+a_j, \pq \x_j=H_j^2/K_j.}
Then we have 
\EQ{
 h_j\le H_j, \pq \x_j \le \x_{j-1}+1 \le \x_0 + j,}
which implies that 
\EQ{
 \y_j:=\expo^{2H_j^2/K_j}R_j^2=\expo^{2(\x_j-j)}R_0^2}
is monotone decreasing in $j$. This is not sufficient to sum over $j$, for which we have to sharpen the above estimate. 

The idea is to exploit the room given by the factor $1/K_j$ in the exponential to show that the sum \eqref{red sum} is essentially dominated by the first term $\y_0$. The  possible growth 
of the  denominator $h_j^2$ will not play any role for the summability and can be replaced by $h_0^2$. 

Let $J=\{1,\dots,N\}$ and  define the sets $A$ and $B$ by 
\EQ{
 A:=\{j\in J\mid a_jH_{j-1} \le K_{j-1}-K_j/3\}, \pq B:=J\setminus A.}
On the region $A$, the sequence $\y_j$ decays fast enough to be summed without using the factor $1/K_j$, while on $B$, the decrease of $1/K_j$ is effective enough to supply the summability.

Indeed, for $j\in A$, we have by the same computation as in \eqref{young}, 
\EQ{
 \x_j \le \x_{j-1}+8/9,}
whereas for $j\in B$ we have 
\EQ{
 a_j^2 \gec \frac{K_{j-1}^2}{H_{j-1}^2} = \frac{K_{j-1}}{\x_{j-1}} \gec \frac{1}{\x_0+j-1}.}
For the sum over $A$ we have 
\EQ{
 \sum_{j\in A} \y_j \le \sum_{k=1}^{\# A}\expo^{-2k/9}\y_0 \lec \y_0=\expo^{2h_0^2/K_0}R_0^2,}
and so
\EQ{ \label{bound over A}
 \sum_{j\in A} \frac{\expo^{2H_j^2/K_j}}{h_j^2}R_j^2 \lec \frac{\expo^{2h_0^2/K_0}}{h_0^2}R_0^2 \lec M_0.}
To bound the sum over $B$, let $a,a+1,\dots,b$ be any maximal consecutive sequence in $B$. Then for any $j\in\{a,\dots,b\}$ we have 
\EQ{
 \pt \x_j \le \x_a + j-a, \pq a_{j+1}^2 \ge \frac{\de}{\x_a+j-a},}
for some fixed  constant $\de>0$. Now let 
\EQ{
 \z_j := K_j(\x_a+j-a)-j.}
Then we have $H_j^2-j\le \z_j$ and 
\EQ{
 \z_j-\z_{j-1} = a_j^2(\x_a+j-1-a) + K_j-1 \ge \de + \ka -1.}
Now we choose $\ka$ sufficiently close to $1$ so that the right hand side is bigger than $\de/2$. Then we have 
\EQ{
 \sum_{j=a}^b \expo^{2H_j^2}R_j^2 \le \sum_{j=a}^b \expo^{2\z_j}R_0^2 \le \sum_{k=0}^{b-a} \expo^{2\z_b-\de k}R_0^2 \lec \expo^{2\z_b}R_0^2 \le \y_a \le \y_{a-1},}
and so
\EQ{
 \sum_{j\in B}\expo^{2H_j^2}R_j^2 \lec \y_0+\sum_{j\in A}\y_j\lec\y_0,}
which implies, together with \eqref{bound over A}, the desired estimate in (2).  

Finally we prove (4) from (2). By the radial Sobolev inequality we have, 
\EQ{
 |\fy(r)|^2 \lec \|\fy\|_{L^2}\|\fy_r\|_{L^2}/r,}
 and hence, $\fy_n(r)\to 0$ as $r\to\I$ uniformly in $n$. Moreover,   
\EQ{
 [\fy_n]_{R_0}^{R_1} = \int_{R_0}^{R_1}\p_r\fy_n dr}
converges as $n\to\I$ for any $0<R_0<R_1<\I$. Hence $\fy_n(r)\to\fy(r)$ at every $r\in(0,\I)$. 
By the radial Sobolev inequality and that $g(\fy)=o(|\fy|^2)$ as $\fy\to 0$, for any $\e>0$ there is $R>0$ independent of $n$ such that 
\EQ{ 
 \int_R^\I g(\fy_n) rdr \le \int_R^\I \e|\fy_n|^2rdr \lec \e\|\fy_n\|_{L^2}^2.}
 By (2) and the fact  that $g(\fy)=o(\exp(2|\fy|^2)|\fy|^{-2})$ as $|\fy|\to\I$, there is $L>1$ indepedent of $n$ such that 
\EQ{
 \int_{|\fy_n|>L}g(\fy_n) dx \le \int_{|\fy_n|>L}\e\exp(2|\fy_n|^2)|\fy_n|^{-2}dx \lec \e\|\fy_n\|_{L^2}^2.}
Now define $g^L$ by 
\EQ{
 g^L(u)=\CAS{g(u) &(|u|\le L) \\ g(Lu/|u|) &(|u|>L)}.}
Then we have 
\EQ{
 \Limsup_{n\to\I} |G(\fy_n)-G(\fy)| \lec \e+\Limsup_{n\to\I}\int_{|x|<R}|g^L(\fy_n)-g^L(\fy)|dx = \e,}
by the dominated convergence theorem. This ends the proof of (4). 
\end{proof}

\section{Elliptic equation}
Now we consider the existence of solutions for the nonlinear elliptic equation
\EQ{ \label{stat eq}
 -\De Q + c Q = f'(Q),}
for $Q(x):\R^2\to\R$, $c>0$, and exponential nonlinearity $f$. The existence of the ground state, namely the positive radial solution, with the least energy among all solutions, has been studied by the ODE technique in \cite{AP,GMSTY} including supercritical nonlinearity, and by the variational technique in \cite{BGK} for subcritical nonlinearity, and in \cite{Cao} including the critical nonlinearity. 

Combining the above precised Trudinger-Moser inequality with the argument in \cite{ScatBlow}, we deduce the following. Let $D$ be the operator defined by $Df(u)=uf'(u)$. 

\begin{thm}
Assume that $f:\R\to\R$ satisfies $f(u)=o(u^2)$ as $u\to 0$, $(D-2)f\ge\e f\ge 0$ for some $\e>0$, and that there exists $\ka_0\ge 0$ such that for all $\ka_+>\ka_0>\ka_-$, 
\EQ{
 \lim_{|u|\to\I}Df(u)\expo^{-\ka_+|u|^2}=0, \pq 
 \lim_{|u|\to\I}f(u)\expo^{-\ka_-|u|^2}=\I,}
and $\lim_{|u|\to\I}Df(u)/f(u)=\I$. 
Then there exists $c_*\in(0,\I]$ such that there is a positive radial solution $Q_c$ for each $c\in(0,c_*]$, which has the least energy among all the solutions. Moreover, $c_*=C_{f,\ka_0}$ in (2) of Theorem \ref{main} when it is finite, while $c_*=\I$ is equivalent to 
\EQ{
 \Limsup_{|u|\to\I} f(u)\expo^{-\ka_0|u|^2}|u|^2 = \I.}
In addition, we have 
\EQ{
 \ka_0\|\na Q_c\|_{L^2}^2 \le 4\pi,}
where the equality holds if and only if $c=c_*$. 
\end{thm}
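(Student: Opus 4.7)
The plan is variational: construct $Q_c$ as a minimizer of the action
\EQ{
 S_c(u)=\tfrac12\|\na u\|_{L^2}^2+\tfrac c2\|u\|_{L^2}^2-\int_{\R^2} f(u)\,dx}
on its Nehari manifold $\mathcal N_c=\{u\ne 0:S_c'(u)u=0\}$, and identify the admissible range of $c$ with the regime in which Theorem \ref{main}(B) controls the nonlinear functionals. Pairing the equation with $Q$ and with $x\cdot\na Q$ yields the Nehari identity $\|\na Q\|_{L^2}^2+c\|Q\|_{L^2}^2=\int Df(Q)\,dx$ and the 2D Pohozaev identity $c\|Q\|_{L^2}^2=2\int f(Q)\,dx$ (the gradient term drops in dimension two), which together give
\EQ{
 S_c(Q)=\tfrac12\|\na Q\|_{L^2}^2=\tfrac12\int_{\R^2}(D-2)f(Q)\,dx.}
Since $(D-2)f\ge\varepsilon f\ge 0$, Schwartz symmetrization does not increase $S_c$ on $\mathcal N_c$ (after a scalar rescaling back to the manifold), so the search reduces to nonnegative radial competitors.

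In the subcritical regime I would extract $Q_c$ by direct minimization. Coercivity of $S_c|_{\mathcal N_c}$ from $(D-2)f\ge\varepsilon f$ provides an $H^1$-bound on any minimizing sequence $u_n$ for $m(c):=\inf_{\mathcal N_c}S_c$. After radial symmetrization $u_n\rightharpoonup u_*$ in $H^1$, and the assumptions on $f$ (vanishing as $u\to 0$, upper growth at rate $\kappa_+>\kappa_0$, and $\lim Df/f=\I$) translate into the compactness condition (3) of Theorem \ref{main} with $K=2/\kappa_0$ for both $g=f$ and $g=Df$. Part (4) then passes both $\int f(u_n)$ and $\int Df(u_n)$ to the limit, and the superquadratic Nehari relation rules out $u_*\equiv 0$; the limit $u_*=Q_c$ realizes the infimum and is consequently of least energy among all solutions, since every solution lies on $\mathcal N_c$.

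The a priori bound $\kappa_0\|\na Q_c\|_{L^2}^2\le 4\pi$ and the identification $c_*=C_{f,\kappa_0}$ come from plugging Theorem \ref{main}(B)(2) (applied with $K=2/\kappa_0$ and $g=f$) into the Pohozaev identity: $c\|Q_c\|_{L^2}^2=2\int f(Q_c)\le 2C_{f,\kappa_0}\|Q_c\|_{L^2}^2$, hence $c\le C_{f,\kappa_0}$. Monotonicity of $c\mapsto m(c)$ inherited from positivity of $(D-2)f$ determines $c_*:=\sup\{c:Q_c\text{ exists}\}$ and forces $c_*=C_{f,\kappa_0}$ when the latter is finite. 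The equivalence of $c_*=\I$ with $\Limsup|u|^2f(u)\expo^{-\kappa_0|u|^2}=\I$ is exactly the negation of (1) in Theorem \ref{main}(B): when boundedness fails the a priori $L^2$ control disappears, permitting existence for arbitrarily large $c$ by a direct rescaling argument.

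The main obstacle is the borderline case $c=c_*<\I$, where $\kappa_0\|\na u_n\|_{L^2}^2\to 4\pi$ allows the Moser-type concentration of Section \ref{conc} to destroy compactness of the nonlinear terms. The key observation, made explicit in the remark following Theorem \ref{main}, is that sequences saturating (B) must carry a nontrivial $L^2$-tail away from the concentration point; together with $\lim Df(u)/f(u)=\I$ this forces the tail alone to carry the full Nehari balance in the limit, producing a nontrivial weak limit $u_*$. The subcritical theory already applied to this tail yields $Q_{c_*}=u_*$ with $\kappa_0\|\na Q_{c_*}\|_{L^2}^2=4\pi$ attained, while for $c<c_*$ the inequality is strict because Theorem \ref{main}(B)(2) is then a strict inequality on any fixed minimizer.
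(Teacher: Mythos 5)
The paper does not actually prove this theorem; it explicitly cites \cite{ScatBlow} for the complete argument, so there is no in-paper proof to compare against. Your sketch can only be assessed on its own merits and against the hints the paper gives about what the real mechanism is.

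The variational framework you set up is sound: the Nehari and Pohozaev identities $\|\na Q\|_{L^2}^2+c\|Q\|_{L^2}^2=\int Df(Q)$, $c\|Q\|_{L^2}^2=2\int f(Q)$, and the consequent $S_c(Q)=\tfrac12\|\na Q\|_{L^2}^2=\tfrac12\int(D-2)f(Q)$ are all correct, and symmetrization plus rescaling onto $\mathcal N_c$ is the right reduction. But your compactness mechanism is pointed at the wrong tool, and the paper says so directly: after the theorem the authors remark that ``the compactness (4) of Theorem \ref{main} does not seem useful for the above problem, but the compactness on a minimizing sequence comes from the superpower growth $Df(u)/f(u)\to\I$ together with a variational constraint.'' Your step ``Part (4) then passes both $\int f(u_n)$ and $\int Df(u_n)$ to the limit'' is exactly what the paper says cannot be done. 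Indeed the hypotheses do not give condition (3) at $K=2/\ka_0$: they only give $Df(u)\expo^{-\ka_+|u|^2}\to 0$ for every $\ka_+>\ka_0$, which is consistent with, say, $Df(u)\sim\expo^{\ka_0|u|^2}$, whereas (3) demands $\expo^{-\ka_0|u|^2}|u|^2Df(u)\to 0$. So the invocation of (4) is unjustified, and the actual compactness in \cite{ScatBlow} is extracted from $Df/f\to\I$ together with the constraint, not from Theorem \ref{main}(C).

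The borderline case $c=c_*$, which you yourself flag as the main obstacle, is essentially left as heuristics. ``The tail carries the full Nehari balance, producing a nontrivial weak limit'' is not an argument: a nontrivial weak limit is perfectly consistent with loss of compactness in the nonlinear term, and the issue is precisely to rule out the concentrating bubble from carrying part of the energy. The paper's own hint is the observation that the best constants in the Trudinger--Moser inequalities are \emph{not} attained along concentrating sequences; the real argument in \cite{ScatBlow} exploits this, together with the strict inequality in (B)(2) away from concentration, rather than a profile-type balance. Finally, check your constants: from $c\|Q\|_{L^2}^2=2\int f(Q)\le 2C_{f,\ka_0}\|Q\|_{L^2}^2$ you get $c\le 2C_{f,\ka_0}$, not $c\le C_{f,\ka_0}$, so either the normalization of $C_{f,\ka_0}$ needs to be tracked more carefully or a factor of 2 has been dropped somewhere in the identification $c_*=C_{f,\ka_0}$.
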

The subcritical nonlinearity is covered by taking $\ka_0=0$, while the supercritical nonlinearity such as those in \cite[Theorem 6]{GMSTY} is also covered by taking $\ka_0>0$. Although it is not easy to fully compare the conditions in \cite{AP,GMSTY} with our variational condition, there is certainly a new case, that is when 
\EQ{
 f(u)\gg e^{\ka_0|u|^2}/|u|^2, \pq f'(u)=o(e^{\ka_0|u|^2}) \pq(|u|\to\I).} 
Such nonlinearity has been investigated for the Dirichlet problem 
\EQ{
 -\De u = f'(u) \text{ in $\Om$,} \pq u=0 \text{ on $\partial\Om$}}
on a bounded domain $\Om$, including the threshold case $f'(u)=e^{\ka_0|u|^2}/u$, see \cite{FOR}. The ground state for $c=1$ was constructed in \cite{Cao} under the conditions $|f'(u)|\lec e^{4\pi u^2}$, $(D-2)f\ge\e f$ and, for some $p\in(2,\I)$, 
\EQ{
 Df(u) \ge \frac{p}{2}|u|^p\left(\frac{\e}{2+\e}\right)^{1-p/2}\inf_{0\not=u\in H^1(\R^2)}\frac{\|u\|_{H^1}^p}{\|u\|_{L^p(\R^2)}^p}.}
It seems difficult to compare this and our condition $1\le C_{f,\ka_0}$. 

We omit a proof of the above theorem, for it was completely proved in \cite{ScatBlow} except for the necessary and sufficient condition for $c_*=\I$, but it was stated in the form (2) on Theorem \ref{main}. 
It is worth noting that the compactness (4) of Theorem \ref{main} does not seem useful for the above problem, but the compactness on a minimizing sequence comes from the superpower growth $Df(u)/f(u)\to\I$ together with a variational constraint, see \cite{ScatBlow}. A related fact is that the best constants in \eqref{Mos3} and in \eqref{Mos2} are not attained by any concentrating sequences, see \cite{CC,Flu,Ruf}. 

\section{From TM with the exact growth to TM with $H^1$}\label{Mos4 to Mos2}
In this section we show how one can derive Proposition \ref{proptm} from our inequality \eqref{Mos4} using H\"older only. Let $u\in H^1(\R^2)$ satisfy $\|\na u\|_{L^2}\le 1$. First observation is that by Taylor expansion of $\exp$, there is a constant $C_0>0$ such that 
\EQ{ \label{power bound}
 \forall n\in\N, \pq \int_{\R^2}(4\pi|u|^2)^n dx\le C_0(n+1)!\|u\|_{L^2}^2,}
hence there is a constant $C_1>0$ such that for any $p\ge 1$
\EQ{ \label{L2p bd}
 \||u|^2\|_{L^p} \le C_1 p \|u\|_{L^2}^{2/p},}
which is extended to noninteger $p$ by H\"older. 

Next, if $\|u\|_{H^1}^2\le 1$ then for some $\te\in(0,1)$ we have
\EQ{
 \|u\|_{L^2}^2 \le \te, \pq \|\na u\|_{L^2}^2 \le 1-\te.}
If we can take $\te\ge 1/2$, then \eqref{Mos2} follows from \eqref{Mos1} applied to
  $\sqrt{2} u $ with $\al=2\pi$. So we may assume that $\te<1/2$. 
Let $A:=\{x\in\R^2\mid |u(x)|\ge 1\}$. Then by H\"older 
\EQ{ \label{Mos2Hol}
 \int_A e^{4\pi|u|^2}dx \le \left[\int_A \frac{e^{4\pi|u|^2/(1-\te)}}{|u|^2/(1-\te)}dx\right]^{1-\te}\||u|^2/(1-\te)\|_{L^{\frac{1-\te}{\te}}(A)}^{1-\te},}
where the first term on the right is bounded by \eqref{Mos4}
\EQ{
 \int_A \frac{e^{4\pi|u|^2/(1-\te)}}{|u|^2/(1-\te)}dx \le C_2\frac{\|u\|_{L^2}^2}{1-\te} \le C_2\frac{\te}{1-\te},}
while the second term is bounded by \eqref{power bound}
\EQ{
 \left\|\frac{|u|^2}{1-\te}\right\|_{L^{\frac{1-\te}{\te}}(A)} \le C_1\frac{1-\te}{\te}\left\|\frac{u}{\sqrt{1-\te}}\right\|_{L^2}^{\frac{2\te}{1-\te}} \le C_1\left[\frac{1-\te}{\te}\right]^{\frac{1-2\te}{1-\te}}. }
Putting them into \eqref{Mos2Hol}, we obtain 
\EQ{ 
 \int_A e^{4\pi|u|^2}dx \lec  \left[ \frac{\te}{1-\te} \right]^{1-\te} 
  \left[\frac{1-\te}{\te}\right]^{1-2\te}  \le \left[ \frac{\te}{1-\te} \right]^\te,}
which is bounded for $0<\te<1/2$. The estimate on $\R^2\setminus A$ is trivial. 

\section{Acknowledgments}
The first author is thankful to Professor Yoshio Tsutsumi and all members of the Math Department at Kyoto University for their very generous hospitality.

 

\end{document}